\newtheorem{theorem}{Theorem}
\theoremstyle{plain}
\newtheorem{definition}{Definition}
\newtheorem{example}{Example}
\numberwithin{equation}{section}
\begin{document}
\title[Continuous fiber bundles of $C^{\ast }$-algebras]{On continuous fiber
bundles of $C^{\ast }$-algebras over Stonean compact}
\dedicatory{Dedicated to the memory of Professor George Bachman (Polytechnic
University, NY, USA)}
\author{Alexander A. Katz}
\address{Alexander A. Katz, Department of Mathematics and Computer Science,
St. John's College of Liberal Arts and Sciences, St. John's University, 300
Howard Avenue, DaSilva Academic Center 314, Staten Island, NY 10301, USA}
\email{katza@stjohns.edu}
\author{Roman Kushnir}
\address{Roman Kushnir, Department of Mathematical Sciences, University of
South Africa, P.O. Box 392, Pretoria 0003, Republic of South Africa}
\curraddr{Roman Kushnir, Department of Mathematics and Computer Studies,
York College, CUNY, 94-20 Guy R. Brewer Boulevard, Jamaica, NY 11451, USA}
\email{kushnir\_roman@yahoo.com}
\date{June 26-27, 2010}
\subjclass[2010]{Primary 46L05; Secondary 46A19.}
\keywords{Banach-Kantorovich space, $C^{\ast }$-algebra over the algebra $%
C_{\infty }(Q,%
%TCIMACRO{\U{2102} }%
%BeginExpansion
\mathbb{C}
%EndExpansion
),$ Stonean compact, Continuous Fiber Bundle of $C^{\ast }$-algebras.}

\begin{abstract}
We introduce $C^{\ast }$-algebras over $C_{\infty }(Q,%
%TCIMACRO{\U{2102} }%
%BeginExpansion
\mathbb{C}
%EndExpansion
)$ as Banach-Kantorovich $^{\ast }$-algebras over the algebra $C_{\infty }(Q,%
%TCIMACRO{\U{2102} }%
%BeginExpansion
\mathbb{C}
%EndExpansion
)$ of extended continuous complex-valued functions, defined on comeager
subsets of Stonean compact $Q$, whose norm satisfies conditions similar to
the axioms of $C^{\ast }$-algebras, and show that such algebras can be
uniquely up to a $Q$-$C^{\ast }$-isomorphism represented by means of a
continuous complete fiber bundle of $C^{\ast }$-algebras over $Q$.
\end{abstract}

\maketitle

\section{Introduction}

In the present paper we introduce a sort of a continuous v.s. measurable
theory of Ganiev and Chilin (see \cite{GanievChilin2003}) by studying
Banach-Kantorovich spaces which are simultaneously $^{\ast }$-algebras with
norm satisfying $C^{\ast }$-conditions. Algebras of this sort are modules
over the algebra $C_{\infty }(Q,%
%TCIMACRO{\U{2102} }%
%BeginExpansion
\mathbb{C}
%EndExpansion
)$ of extended complex-valued continuous functions defined on comeager
subsets of a Stonean (extremally disconnected) compact $Q.$ Therefore, it is
natural to call them $C^{\ast }$\textbf{-algebras over }$C_{\infty }(Q,%
%TCIMACRO{\U{2102} }%
%BeginExpansion
\mathbb{C}
%EndExpansion
)$, where $Q$ is a Stonean compact. The $C^{\ast }$-algebras over $C_{\infty
}(Q,%
%TCIMACRO{\U{2102} }%
%BeginExpansion
\mathbb{C}
%EndExpansion
)$, where $Q$ is a Stonean compact, exhibit interesting examples of
Banach-Kantorovich spaces, the theory of whose is well developed by now (see
for example the monograph \cite{Kusraev2000} by Kusraev)

The aim of the paper is, following the general idea of representation of
Banach-Kantorovich spaces as continuous fiber bundles of Banach spaces (see 
\cite{KusraevStrizhevsky1984}, \cite{Gutman1991}, \cite{Gutman1993}), to
give a representation of $C^{\ast }$-algebras over $C_{\infty }(Q,%
%TCIMACRO{\U{2102} }%
%BeginExpansion
\mathbb{C}
%EndExpansion
)$, where $Q$ is a Stonean compact, as a continuous fiber bundle of $C^{\ast
}$-algebras over $Q,$ which allows to study them using the general theory of
continuous Banach fiber bundles.

The continuous Banach fiber bundle is a formalization of an intuitive idea
of a family of Banach spaces $\mathfrak{X}(q),$ $q\in Q,$ continuously
changing over $Q$. The invention of Banach bundles is usually credited to
von Neumann who in 1937 spelled out an idea of "variable" Banach spaces. The
formal descriptions were made about 1950 and associated with the names of
Godement, Kaplansky, Gelfand and Naimark, to name a few. Today the theory of
continuous Banach fiber bundles is a wide area of research. The reader can
become familiar with this theory using the monograph \cite{Gierz1982}\ by
Gierz. The idea to represent an analytical object as a space of sections of
a fiber bundle is not new in analysis. One can familiarize himself with it
using the review \cite{HofmannKeimel1977} by Hofmann and Keimel.

We will use without much further references the notions and results of:

\begin{itemize}
\item the theory of Kantorovich spaces and lattice-normed spaces from \cite%
{Vulikh1967}, \cite{Kusraev1985} and \cite{Kusraev2000};

\item the theory of $C^{\ast }$-algebras from \cite{Dixmier1977}, \cite%
{Murphy1990} and \cite{Pedersen1979};
\end{itemize}

\section{Preliminaries}

Let $Q$ be a compact Hausdorff topological space. The space $Q$ is called 
\textbf{Stonean} or \textbf{extremally} (\textbf{Quasi-stonean} or \textbf{%
quasi-extremally}) disconnected or simply \textit{extremal} (\textit{%
quasiextremal}) if the closure of an arbitrary open set (open $F_{\sigma }$%
-set) in it is open or, which is equivalent, the interior of an arbitrary
closed set (closed $G_{\delta }$-set) is closed.

A \textbf{Riesz space} or a \textbf{vector lattice} is an ordered vector
space which is as well a lattice. Thus, a vector lattice contains the least
upper bound (or supremum) and the greatest lower bound (or infimum) of each
finite subset. A \textbf{Kantorovich space} or, briefly, a $\mathbf{K}$%
\textbf{-space} is a vector lattice over the field $%
%TCIMACRO{\U{211d} }%
%BeginExpansion
\mathbb{R}
%EndExpansion
,$ such that every order-bounded set in it has least upper and greatest
lower bounds. Sometimes a more precise term, \textit{(conditionally) order
complete vector lattice}, is employed instead of $K$-space. If, in a vector
lattice, least upper and greatest lower bounds exist only for countable
bounded sets, then it is called a $\mathbf{K}_{\sigma }$\textbf{-space}.
Each $K_{\sigma }$-space and, hence, a $K$-space are Archimedean. We say
that a $K$-space ($K_{\sigma }$-space) is \textbf{universally complete} or 
\textbf{extended} if every its subset (countable subset) composed of
pairwise disjoint elements is bounded.

An \textbf{ordered algebra} over a field $\mathbb{F}$ is an ordered vector
space $E$ over $\mathbb{F}$ which is simultaneously an algebra over the same
field and satisfies the following condition:

$(\ast )$ if $x,y\in E,$ and $\theta \in E$ is the zero element of $E$, and%
\begin{equation*}
x\geq \theta ,
\end{equation*}%
and 
\begin{equation*}
y\geq \theta ,
\end{equation*}%
then%
\begin{equation*}
xy\geq \theta .
\end{equation*}

To characterize the positive cone $E_{+}$ of an ordered algebra $E$, we must
add to what was said in $(\ast )$ the property 
\begin{equation*}
E_{+}\cdot E_{+}\subset E_{+}.
\end{equation*}

We say that $E$ is a \textbf{lattice-ordered algebra} if $E$ is a Riesz
space and an ordered algebra simultaneously. A lattice-ordered algebra is an 
$f$\textbf{-algebra} if, for all $a,x,y\in E_{+}$, the condition 
\begin{equation*}
x\wedge y=\theta ,
\end{equation*}%
implies that 
\begin{equation*}
(ax)\wedge y=\theta ,
\end{equation*}%
and%
\begin{equation*}
(xa)\wedge y=\theta .
\end{equation*}

An $f$-algebra $E$ is called \textbf{faithful} or \textbf{exact} if, for
arbitrary elements $x,y\in E,$%
\begin{equation*}
xy=\theta ,
\end{equation*}%
implies 
\begin{equation*}
\left\vert x\right\vert \wedge \left\vert y\right\vert =\theta .
\end{equation*}%
It is easy to show (see for example \cite{Kusraev2000}) that an $f$-algebra
is faithful if and only if it lacks nonzero nilpotent elements. The
faithfulness of an $f$-algebra is equivalent to absence of strictly positive
element with nonzero square.

A complex vector lattice is defined to be the complexification 
\begin{equation*}
E\dotplus iE,
\end{equation*}%
(with $i\in 
%TCIMACRO{\U{2102} }%
%BeginExpansion
\mathbb{C}
%EndExpansion
$ standing for the imaginary unity) of a real vector lattice $E$. Often it
is additionally required that the modulus 
\begin{equation*}
\left\vert z\right\vert =sup\{\func{Re}(e^{i\eta }z):0\leq \eta \leq \pi \}
\end{equation*}%
exists for every element%
\begin{equation*}
z\in E\dotplus iE.
\end{equation*}

In the case of a $K$-space or an arbitrary Banach lattice this requirement
is automatically satisfied, since a complex $K$-space is the
complexification of a real $K$-space. When we talk about order properties of
a complex vector lattice 
\begin{equation*}
E\dotplus iE,
\end{equation*}%
we mean its real part $E$. The concepts of sublattice, ideal, band,
projection, etc. are naturally translated to the case of a complex vector
lattice by appropriate complexification.

The order of a vector lattice generates different kinds of convergence. Let $%
(\Delta ,\leq )$ be an upward-directed set. A net 
\begin{equation*}
(x_{\alpha })=(x_{\alpha })_{\alpha \in \Delta }\in E,
\end{equation*}%
is called increasing (decreasing) if 
\begin{equation*}
x_{\alpha }\leq x_{\beta },
\end{equation*}
\begin{equation*}
(x_{\beta }\leq x_{\alpha }),
\end{equation*}%
for 
\begin{equation*}
\alpha \leq \beta ,
\end{equation*}%
$\alpha ,\beta \in \Delta $.

It is said that a net $(x_{\alpha })$ $o$\textit{-converges} to an element $%
x\in E$ if there exists a decreasing net $(e_{\alpha })_{\alpha \in \Delta }$
in $E$ such that 
\begin{equation*}
\inf \{e_{\alpha }:\alpha \in \Delta \}=\theta ,
\end{equation*}%
and 
\begin{equation*}
\left\vert x_{\alpha }-x\right\vert \leq e_{\alpha },
\end{equation*}%
for all $\alpha \in \Delta $. In this case we call $x$ the $o$\textit{-limit}
of the net $(x_{\alpha })$ and write 
\begin{equation*}
x=o-\lim x_{\alpha },
\end{equation*}%
or 
\begin{equation*}
x_{\alpha }\overset{(o)}{\rightarrow }x.
\end{equation*}

In a $K$-space, we also introduce the \textbf{upper} and \textbf{lower} $o$%
\textbf{-limits} of an order bounded net by the formulae 
\begin{equation*}
\underset{\alpha \in \Delta }{\lim \sup }x_{\alpha }=\overline{\underset{%
\alpha \in \Delta }{\lim }}x_{\alpha }=\underset{\alpha \in \Delta }{\inf }%
\underset{\beta \geq \alpha }{\sup }x_{\beta },
\end{equation*}

and 
\begin{equation*}
\underset{\alpha \in \Delta }{\lim \inf }x_{\alpha }=\underset{\alpha \in
\Delta }{\underline{\lim }}x_{\alpha }=\underset{\alpha \in \Delta }{\sup }%
\underset{\beta \geq \alpha }{\inf }x_{\beta }.
\end{equation*}

One can see that 
\begin{equation*}
x=o-\lim x_{\alpha },
\end{equation*}%
is equivalent to 
\begin{equation*}
\underset{\alpha \in \Delta }{\lim \sup }x_{\alpha }=x=\underset{\alpha \in
\Delta }{\lim \inf }x_{\alpha }.
\end{equation*}

We say that a net $(x_{\alpha })_{\alpha \in \Delta }$ \textbf{converges
relatively uniformly} or \textbf{converges with regulator} to $x\in E$ if
there exist an element $u$, such that 
\begin{equation*}
\theta \leq u\in E,
\end{equation*}%
called the \textbf{regulator} of convergence, and a numeric net $(\lambda
\alpha )_{\alpha \in \Delta }$ with the properties 
\begin{equation*}
\underset{\alpha \in \Delta }{\lim }\lambda _{\alpha }=0,
\end{equation*}%
and 
\begin{equation*}
\left\vert x_{\alpha }-x\right\vert \leq \lambda _{\alpha }u,
\end{equation*}%
$\alpha \in \Delta .$ The element $x$ is called the $r$\textbf{-limit} of
the net $(x_{\alpha }),$ and we denote it as 
\begin{equation*}
x=r-\underset{\alpha \in \Delta }{\lim }x_{\alpha },
\end{equation*}%
or 
\begin{equation*}
x_{\alpha }\overset{(r)}{\rightarrow }x.
\end{equation*}

The presence of $o$-convergence in a $K$-space justifies the definition of
the sum for an infinite family $(x_{\xi })_{\xi \in \Xi }$. Indeed, let 
\begin{equation*}
A=\mathcal{P}_{fin}(\Xi ),
\end{equation*}%
be the set of all finite subsets of $\Xi $. Given 
\begin{equation*}
\alpha =\{\xi _{1},...,\xi _{n}\}\in A,
\end{equation*}%
we denote%
\begin{equation*}
y_{\alpha }=x_{\xi _{1}}+...+x_{\xi _{n}}.
\end{equation*}

By doing so we obtain the net $(y_{\alpha })_{\alpha \in \Delta }$ which is
naturally ordered by inclusion. If there exists 
\begin{equation*}
x=o-\underset{\alpha \in \Delta }{\lim }y_{\alpha },
\end{equation*}%
then we call the element $x$ the $o$\textbf{-sum} of the family $(x_{\xi })$
and denote it by 
\begin{equation*}
x=o-\dsum\limits_{\xi \in \Xi }x_{\xi }=\dsum\limits_{\xi \in \Xi }x_{\xi }.
\end{equation*}

One can see that, for 
\begin{equation*}
x_{\xi }\geq \theta ,
\end{equation*}%
$\xi \in \Xi ,$ the $o$-sum of the family $(x_{\xi })$ exists if and only if
the net $(y_{\alpha })_{\alpha \in A}$ is order bounded; in this case 
\begin{equation*}
o-\dsum\limits_{\xi \in \Xi }x_{\xi }=\underset{\alpha \in A}{\sup }%
y_{\alpha }.
\end{equation*}

If the elements of the family $(x_{\xi })$ are pairwise disjoint then 
\begin{equation*}
o-\dsum\limits_{\xi \in \Xi }x_{\xi }=\underset{\xi \in \Xi }{\sup }x_{\xi
}^{+}-\underset{\xi \in \Xi }{\sup }x_{\xi }^{-}.
\end{equation*}

Every $K$-space is $o$-complete in the following sense:

If a net $(x_{\alpha })_{\alpha \in A}$ satisfies the condition 
\begin{equation*}
\lim \sup \left\vert x_{\alpha }-x_{\beta }\right\vert =\underset{\lambda
\in A}{\inf }\underset{\alpha ,\beta \geq \lambda }{\sup }\left\vert
x_{\alpha }-x_{\beta }\right\vert ,
\end{equation*}

then there is an element $x\in E$ such that 
\begin{equation*}
x=o-\lim x_{\alpha }.
\end{equation*}

The space of continuous functions taking infinite values on a nowhere dense
subsets of a topological space $Q$ plays an important role in the theory of
Riesz spaces. To introduce this space, we need some auxiliary facts.

Given a function 
\begin{equation*}
f:Q\rightarrow \overline{%
%TCIMACRO{\U{211d} }%
%BeginExpansion
\mathbb{R}
%EndExpansion
},
\end{equation*}%
and a number $\lambda \in 
%TCIMACRO{\U{211d} }%
%BeginExpansion
\mathbb{R}
%EndExpansion
,$ we denote 
\begin{equation*}
\{f<\lambda \}=\{q\in Q:f(q)<\lambda \},
\end{equation*}%
and 
\begin{equation*}
\{f\leq \lambda \}=\{q\in Q:f(q)\leq \lambda \}.
\end{equation*}

Let $Q$ be an arbitrary topological space, let $\Lambda $ be a dense set in $%
%TCIMACRO{\U{211d} }%
%BeginExpansion
\mathbb{R}
%EndExpansion
,$ and let 
\begin{equation*}
\lambda \mapsto G_{\lambda },
\end{equation*}%
$\lambda \in \Lambda ,$ be an increasing mapping from $\Lambda $ into the
power-set $\mathcal{P}(Q)$ of the space $Q$, ordered by inclusion. We denote 
\begin{equation*}
\overline{%
%TCIMACRO{\U{211d} }%
%BeginExpansion
\mathbb{R}
%EndExpansion
}=%
%TCIMACRO{\U{211d} }%
%BeginExpansion
\mathbb{R}
%EndExpansion
\cup \{\pm \infty \}.
\end{equation*}%
Then the following assertions are equivalent (see for example \cite%
{Kusraev2000}):

$1).$ there exists a unique continuous function 
\begin{equation*}
f:Q\rightarrow \overline{%
%TCIMACRO{\U{211d} }%
%BeginExpansion
\mathbb{R}
%EndExpansion
},
\end{equation*}%
such that 
\begin{equation*}
\{f<\lambda \}\subset G_{\lambda }\subset \{f\leq \lambda \},
\end{equation*}%
$\lambda \in \Lambda ;$

2). for arbitrary $\lambda ,\mu \in \Lambda $, the inequality 
\begin{equation*}
\lambda <\mu ,
\end{equation*}%
implies 
\begin{equation*}
cl(G_{\lambda })\subset int(G_{\mu }).
\end{equation*}

Let $Q$ be a Quasi-stonean space, let $Q_{0}$ be an open dense $F_{\sigma }$%
-subset in $Q$, and let 
\begin{equation*}
f_{0}:Q_{0}\rightarrow \overline{%
%TCIMACRO{\U{211d} }%
%BeginExpansion
\mathbb{R}
%EndExpansion
},
\end{equation*}%
be a continuous function. There exists (see for example \cite{Kusraev2000})
a unique continuous function 
\begin{equation*}
f:Q\rightarrow \overline{%
%TCIMACRO{\U{211d} }%
%BeginExpansion
\mathbb{R}
%EndExpansion
},
\end{equation*}%
such that 
\begin{equation*}
f(q)=f_{0}(q),
\end{equation*}%
for all $q\in Q_{0}$.

We denote by $C_{\infty }(Q,%
%TCIMACRO{\U{211d} }%
%BeginExpansion
\mathbb{R}
%EndExpansion
)$ the set of all continuous functions 
\begin{equation*}
f:Q\rightarrow \overline{%
%TCIMACRO{\U{211d} }%
%BeginExpansion
\mathbb{R}
%EndExpansion
},
\end{equation*}%
that may take values $\pm \infty $ only on a nowhere dense set. Introduce
some order on $C_{\infty }(Q,%
%TCIMACRO{\U{211d} }%
%BeginExpansion
\mathbb{R}
%EndExpansion
)$ by putting 
\begin{equation*}
f\leq g,
\end{equation*}%
if and only if 
\begin{equation*}
f(q)\leq g(q),
\end{equation*}%
for all $q\in Q$. Further, take $f,g\in C_{\infty }(Q,%
%TCIMACRO{\U{211d} }%
%BeginExpansion
\mathbb{R}
%EndExpansion
)$ and put 
\begin{equation*}
Q_{0}=\{\left\vert f\right\vert <\infty \}\cup \{\left\vert g\right\vert
<\infty \}.
\end{equation*}%
Then $Q_{0}$ is an open and dense $F_{\sigma }$-subset in $Q$. According to
aforementioned, there exists a unique function 
\begin{equation*}
h:Q\rightarrow \overline{%
%TCIMACRO{\U{211d} }%
%BeginExpansion
\mathbb{R}
%EndExpansion
}
\end{equation*}%
such that 
\begin{equation*}
h(q)=f(q)+g(q),
\end{equation*}%
for $q\in Q_{0}$. This function $h$ is considered to be the sum of the
elements $f$ and $g$. The product of two arbitrary elements is defined in a
similar way. Identifying a number $\lambda $ with the function identically
equal to $\lambda $ on $Q$, we obtain the product of an arbitrary $f\in
C_{\infty }(Q,%
%TCIMACRO{\U{211d} }%
%BeginExpansion
\mathbb{R}
%EndExpansion
)$ and $\lambda $ $\in $ $%
%TCIMACRO{\U{211d} }%
%BeginExpansion
\mathbb{R}
%EndExpansion
$. It is easy to see that $C_{\infty }(Q,%
%TCIMACRO{\U{211d} }%
%BeginExpansion
\mathbb{R}
%EndExpansion
)$ with the so-introduced operations and order is a vector lattice and
simultaneously a faithful $f$-algebra. It is well known that $C_{\infty }(Q,%
%TCIMACRO{\U{211d} }%
%BeginExpansion
\mathbb{R}
%EndExpansion
)$ is a universally complete $K_{\sigma }$-space. The function identically
equal to unity is a ring and lattice unity. The base of the vector lattice $%
C_{\infty }(Q,%
%TCIMACRO{\U{211d} }%
%BeginExpansion
\mathbb{R}
%EndExpansion
)$ is isomorphic to the Boolean algebra of all regular open (closed) subsets
of the compact space $Q$. If the compact space $Q$ is extremal then $%
C_{\infty }(Q,%
%TCIMACRO{\U{211d} }%
%BeginExpansion
\mathbb{R}
%EndExpansion
)$ is universally complete $K$-space whose base is isomorphic to the algebra
of all clopen subsets in $Q$. The vector lattice $C(Q,%
%TCIMACRO{\U{211d} }%
%BeginExpansion
\mathbb{R}
%EndExpansion
)$ of all continuous functions on $Q$ is an order-dense ideal in $C_{\infty
}(Q,%
%TCIMACRO{\U{211d} }%
%BeginExpansion
\mathbb{R}
%EndExpansion
)$; thus, $C(Q,%
%TCIMACRO{\U{211d} }%
%BeginExpansion
\mathbb{R}
%EndExpansion
)$ is a $K$-space ($K_{\sigma }$-space) if and only if such is $C_{\infty
}(Q,%
%TCIMACRO{\U{211d} }%
%BeginExpansion
\mathbb{R}
%EndExpansion
)$. We denote 
\begin{equation*}
C_{\infty }(Q,%
%TCIMACRO{\U{2102} }%
%BeginExpansion
\mathbb{C}
%EndExpansion
)=C_{\infty }(Q,%
%TCIMACRO{\U{211d} }%
%BeginExpansion
\mathbb{R}
%EndExpansion
)\dotplus iC_{\infty }(Q,%
%TCIMACRO{\U{211d} }%
%BeginExpansion
\mathbb{R}
%EndExpansion
).
\end{equation*}

Consider now a vector space $X$ and a real vector lattice $E$. We will
assume all vector lattices to be Archimedean without further stipulations. A
mapping 
\begin{equation*}
p:X\rightarrow E_{+}
\end{equation*}%
is called an ($E$-valued) vector norm if it satisfies the following axioms:

$1).$%
\begin{equation*}
p(x)=\theta \leftrightarrow x=\mathbf{0,}
\end{equation*}%
$x\in X$;

$2).$%
\begin{equation*}
p(\lambda x)=\left\vert \lambda \right\vert p(x),
\end{equation*}%
$\lambda \in 
%TCIMACRO{\U{211d} }%
%BeginExpansion
\mathbb{R}
%EndExpansion
,$ $x\in X$;

$3).$%
\begin{equation*}
p(x+y)\leq p(x)+p(y),
\end{equation*}%
$x,y\in X.$

A vector norm $p$ is said to be a \textbf{decomposable} or \textbf{%
Kantorovich norm} if:

$4).$\ for arbitrary $e_{1},e_{2}\in E_{+}$ and $x\in X$, the equality 
\begin{equation*}
p(x)=e_{1}+e_{2},
\end{equation*}%
implies the existence of $x_{1},x_{2}\in X$ such that 
\begin{equation*}
x=x_{1}+x_{2},
\end{equation*}%
and 
\begin{equation*}
p(x_{k})=e_{k},
\end{equation*}%
$k=1,2.$

The triple $(X,p,E)$ (simpler, $X$ or $(X,p)$ with the implied parameters
omitted) is called a\textbf{\ lattice-normed space} if $p$ is an $E$-valued
norm on the vector space $X$. If the norm $p$ is decomposable then the space 
$(X,p)$ itself is called \textbf{decomposable}.

Take a net $(x_{\alpha })_{\alpha \in A}$ in $X$. We say that $(x_{\alpha })$
$bo$-converges to an element $x\in X$ and write 
\begin{equation*}
x=bo-\lim x_{\alpha },
\end{equation*}%
provided that there exists a decreasing net $(e_{\gamma })_{\gamma \in
\Gamma }$ in $E$ such that 
\begin{equation*}
\underset{\gamma \in \Gamma }{\inf }e_{\gamma }=\theta ,
\end{equation*}%
and, for every $\gamma \in \Gamma $, there exists an index $\alpha (\gamma
)\in A$ such that 
\begin{equation*}
p(x-x_{\alpha })\leq e_{\gamma },
\end{equation*}%
for all 
\begin{equation*}
\alpha \geq \alpha (\gamma ).
\end{equation*}

Let $e\in E_{+}$ be an element satisfying the following condition:

$(\ast )$ for an arbitrary $\varepsilon >0$, there exists an index $\alpha
(\varepsilon )\in A$ such that 
\begin{equation*}
p(x-x_{\alpha })\leq \varepsilon e,
\end{equation*}%
for all 
\begin{equation*}
\alpha \geq \alpha (\varepsilon ).
\end{equation*}%
Then we say that $(x_{\alpha })$ $br$\textbf{-converges} to $x$ and write 
\begin{equation*}
x=br-\lim x_{\alpha }.
\end{equation*}%
We say that a net $(x_{\alpha })$ is $bo$\textbf{-fundamental} ($br$\textbf{%
-fundamental}) if the net 
\begin{equation*}
(x_{\alpha }-x_{\beta })_{(\alpha ,\beta )\in A\times A}
\end{equation*}%
$bo$-converges ($br$-converges) to zero. A lattice-normed space is $bo$%
\textbf{-complete} ($br$\textbf{-complete}) if every $bo$-fundamental ($br$%
-fundamental) net in it $bo$-converges ($br$-converges) to some element of
the space.

Take a net $(x_{\xi })_{\xi \in \Xi }$ and relate to it a net $(y_{\alpha
})_{\alpha \in A},$ where 
\begin{equation*}
A=\mathcal{P}_{fin}(\Xi ),
\end{equation*}%
is the collection of all finite subsets of $\Xi $ and 
\begin{equation*}
y_{\alpha }=\dsum\limits_{\xi \in \alpha }x_{\xi }.
\end{equation*}%
If 
\begin{equation*}
x=bo-\lim y_{\alpha },
\end{equation*}%
exists, then we call $(x_{\xi })$ $bo$\textbf{-summable} with sum $x$ and
write 
\begin{equation*}
x=bo-\dsum\limits_{\xi \in \Xi }x_{\xi }.
\end{equation*}

A set 
\begin{equation*}
M\subset X,
\end{equation*}%
is called \textbf{bounded in norm} or \textbf{norm-bounded} if there exists $%
e\in E_{+}$ such that 
\begin{equation*}
p(x)\leq e,
\end{equation*}%
for all $x\in M$. A space $X$ is said to be $d$\textbf{-complete} if every
bounded set of pairwise disjoint elements in $X$ is $bo$-summable.

We call elements $x,y\in X$ \textbf{disjoint} and write 
\begin{equation*}
x\bot y,
\end{equation*}%
whenever 
\begin{equation*}
p(x)\wedge p(y)=\theta .
\end{equation*}%
Obviously, the relation $\bot $ satisfies all axioms of disjointness.

We call a norm $p$ (or the whole space $X$) $d$\textbf{-decomposable}
provided that, for every $x\in X$ and disjoint $e_{1},e_{2}\in E_{+},$ there
exist $x_{1},x_{2}\in X$ such that 
\begin{equation*}
x=x_{1}+x_{2},
\end{equation*}%
and 
\begin{equation*}
p(x_{k})=e_{k},
\end{equation*}%
$k=1,2.$ A decomposable $bo$-complete lattice-normed space is called a 
\textbf{Banach-Kantorovich space}.

Let $Q$ be a topological space. A \textbf{fiber bundle} over $Q$ is
understood as an arbitrary continuous surjection 
\begin{equation*}
\sigma :\mathfrak{X}\rightarrow Q,
\end{equation*}%
from a topological space $\mathfrak{X}$ onto $Q$. A non-empty set 
\begin{equation*}
\mathfrak{X}_{q}=\sigma ^{-1}(q),
\end{equation*}%
is called a \textbf{fiber} at the point $q\in Q.$ A mapping $s$ from a
non-empty set $dom(s)\subset Q$ into $\mathfrak{X}$ is called a \textbf{%
section} over $dom(s)$, if 
\begin{equation*}
s(q)\in X_{q},
\end{equation*}%
for all $q\in dom(s).$ A continuous section $s$ is called \textbf{local}, 
\textbf{almost global}, or \textbf{global}, if its domain $dom(s)$ is
respectively proper open subset, comeager subset, or coinsides with $Q$. A
fiber bundle 
\begin{equation*}
\sigma :\mathfrak{X}\rightarrow Q,
\end{equation*}%
is usually identified with the mapping 
\begin{equation*}
q\mapsto \mathfrak{X}_{q},
\end{equation*}%
$q\in Q,$ and instead of $\mathfrak{X}_{q}$ we may write $\mathfrak{X}(q)$.
Moreover, often times one denotes a fiber bundle just $\mathfrak{X}$, thus
omitting assumed parameters $\sigma $ and $Q$.

A set of sections $S$ is called \textbf{fiberwise dense} in $\mathfrak{X}$,
if the set 
\begin{equation*}
\{s(q):s\in S\},
\end{equation*}%
is dense in the space $\mathfrak{X}(q)$ for each $q\in Q.$ By a \textbf{%
product of two fiber bundles} 
\begin{equation*}
\sigma :\mathfrak{X}\rightarrow Q,
\end{equation*}%
and 
\begin{equation*}
\sigma ^{\prime }:\mathfrak{X}^{\prime }\rightarrow Q^{\prime },
\end{equation*}%
we understand a fiber bundle 
\begin{equation*}
\theta :\mathfrak{X}\times _{Q}\mathfrak{X}^{\prime }\rightarrow Q,
\end{equation*}%
such that 
\begin{equation*}
\mathfrak{X}\times _{Q}\mathfrak{X}^{\prime }=\{(x,x^{\prime })\in \mathfrak{%
X}\times _{Q}\mathfrak{X}^{\prime }:\sigma (x)=\sigma ^{\prime }(x^{\prime
})\},
\end{equation*}%
and 
\begin{equation*}
\theta :(x,x^{\prime })\mapsto \sigma (x)=\sigma ^{\prime }(x^{\prime }),
\end{equation*}%
where 
\begin{equation*}
(x,x\prime )\in \mathfrak{X}\times _{Q}\mathfrak{X}^{\prime }.
\end{equation*}

Let us now consider a fiber bundle 
\begin{equation*}
\sigma :\mathfrak{X}\rightarrow Q,
\end{equation*}%
and let 
\begin{equation*}
\mathfrak{X}_{q}=\sigma ^{-1}(q),
\end{equation*}%
be a Banach space for all $q\in Q,$ i.e. we consider a mapping 
\begin{equation*}
q\mapsto \mathfrak{X}_{q},
\end{equation*}%
from $Q$ to the class of Banach spaces. Let us turn the set of all global
sections $S(Q,\mathfrak{X})$ of the fiber bundle $\mathfrak{X}$ into a
complex vector space by setting 
\begin{equation*}
(\alpha u+\beta v)(q)=\alpha u(q)+\beta v(q),
\end{equation*}%
$q\in Q,$ $\alpha ,\beta \in 
%TCIMACRO{\U{2102} }%
%BeginExpansion
\mathbb{C}
%EndExpansion
,$ and $u,v\in S(Q,\mathfrak{X}).$ For each section $s\in S(Q,\mathfrak{X})$
we introduce a pointwise norm 
\begin{equation*}
\left\Vert s\right\Vert :q\mapsto \left\Vert s(q)\right\Vert _{q},
\end{equation*}%
$q\in Q.$

A subset $\mathfrak{C}$ of global sections $S(Q,\mathfrak{X})$ of a fiber
bundle $\mathfrak{X}$ is called a \textbf{continuous structure} in $%
\mathfrak{X}$, if it satisfies the following conditions:

$1).$ $\mathfrak{C}$ is a subspace of $S(Q,\mathfrak{X});$

$2).$ for each $s\in \mathfrak{C},$ the function $\left\Vert s\right\Vert $
is continuous;

$3).$ for each $q\in Q,$ the set 
\begin{equation*}
\{s(q):s\in \mathfrak{C}\},
\end{equation*}%
is a dense subset in the fiber $\mathfrak{X}_{q}.$

The continuous structure $\mathfrak{C}$ allows one to define a topology in $%
\mathfrak{X}$ with the base consisting of the sets 
\begin{equation*}
\{U(s,\varepsilon )\},
\end{equation*}%
where $\varepsilon >0,$ and $s$ is a restriction of a section from $%
\mathfrak{C}$ to some open subset. With such a topology, the bundle 
\begin{equation*}
\sigma :\mathfrak{X}\rightarrow Q,
\end{equation*}%
is called a \textbf{continuous Banach fiber bundle}. Moreover, a section $%
u\in S(Q,\mathfrak{X})$ is continuous iff the function 
\begin{equation*}
q\mapsto \left\Vert u-s\right\Vert _{q},
\end{equation*}%
$q\in Q,$ is continuous for each section $s\in \mathfrak{C}.$ We denote by $%
C(Q,\mathfrak{X})$ the set of all continuous global sections of the
continuous Banach fiber bundle $\mathfrak{X}$.

Let now $Q$ be a Stonean compact, and let $\mathfrak{X}$ be a continuous
Banach fiber bundle over $Q$. If $u$ is an almost global section of the
bundle $\mathfrak{X}$, then the function 
\begin{equation*}
q\mapsto \left\Vert u(q)\right\Vert _{q},
\end{equation*}%
$q\in Q,$ is defined and continuous on a comeager subset $dom(u)$ of $Q$.
Thus, there exists a unique function $p_{u}\in C(Q,%
%TCIMACRO{\U{2102} }%
%BeginExpansion
\mathbb{C}
%EndExpansion
),$ such that 
\begin{equation*}
p_{u}=\left\Vert u(q)\right\Vert _{q},
\end{equation*}%
$q\in dom(u)\subset Q.$ Let us introduce an equivalence relation in the set $%
M(Q,\mathfrak{X})$ of all almost global sections of the bundle $\mathfrak{X}$%
, by setting 
\begin{equation*}
u\sim v
\end{equation*}%
iff 
\begin{equation*}
u(q)=v(q),
\end{equation*}%
for all $q\in dom(u)\cap dom(v).$ For 
\begin{equation*}
u\sim v,
\end{equation*}%
we have 
\begin{equation*}
p_{u}=p_{v},
\end{equation*}%
thus, we can set by definition 
\begin{equation*}
p_{\widehat{u}}=p(\widehat{u})=p_{u},
\end{equation*}%
where $\widehat{u}$ is a class of equivalency of the almost global section $u
$. Let us denote by $C_{\infty }(Q,\mathfrak{X})$ the factor-set 
\begin{equation*}
M(Q,\mathfrak{X})\diagup \sim .
\end{equation*}

The set $C_{\infty }(Q,\mathfrak{X})$ can naturally be endowed with the
structure of a lattice-normed space. Indeed (see for example \cite%
{Kusraev2000}), by a sum 
\begin{equation*}
\widehat{u}+\widehat{v},
\end{equation*}%
of $u,v\in C_{\infty }(Q,\mathfrak{X}),$ we understand a class of
equivalence of the almost global section 
\begin{equation*}
q\mapsto u(q)+v(q),
\end{equation*}%
$q\in dom(u)\cap dom(v),$ etc.

In each class of equivalence $\widehat{u}$ there exists a unique section 
\begin{equation*}
\widetilde{u}\in \widehat{u},
\end{equation*}%
such that 
\begin{equation*}
dom(v)\subset dom(\widetilde{u}),
\end{equation*}%
for all 
\begin{equation*}
v\in \widehat{u}.
\end{equation*}%
Such a section $\widetilde{u}$ is called \textbf{extended}. Therefore, we
can think of the space $C_{\infty }(Q,\mathfrak{X})$ as a space of all
extended almost global sections of the fiber bundle $\mathfrak{X}$.

A continuous fiber bundle $\mathfrak{X}$ over Stonean compact $Q$ is called 
\textbf{complete}, if each its bounded almost global continuous section can
be extended to a global continuous section. We denote 
\begin{equation*}
C_{\#}(Q,\mathfrak{X})=\{u\in C_{\infty }(Q,\mathfrak{X}):p_{u}\in C(Q,%
%TCIMACRO{\U{211d} }%
%BeginExpansion
\mathbb{R}
%EndExpansion
)\}.
\end{equation*}%
The following conditions are equivalent (see for example \cite{Gutman1993}):

$1).$ the fiber bundle $\mathfrak{X}$ is complete;

$2).$ 
\begin{equation*}
C_{\#}(Q,\mathfrak{X})=C(Q,\mathfrak{X});
\end{equation*}

$3).$ $C(Q,\mathfrak{X})$ is a Banach-Kantorovich space;

$4).$ each bounded continuous section of the fiber bundle $\mathfrak{X}$,
which is defined on a dense subset of $Q$, can be extended to a global
continuous section.

Two complete continuous Banach fiber bundles $\mathfrak{X}$ and $\mathfrak{Y}
$ over the same $Q$, where $Q$ is a Stonean compact, are $Q$-isomorphic iff $%
C_{\infty }(Q,\mathfrak{X})$ and $C_{\infty }(Q,\mathfrak{Y})$ are $Q$%
-isomorphic (see \cite{Gutman1993} for details).

\section{Continuous fiber bundles of $C^{\ast }$-algebras over Stonean
compact $Q$}

Let us give a formal definition of $C^{\ast }$-algebra over $C_{\infty }(Q,%
%TCIMACRO{\U{2102} }%
%BeginExpansion
\mathbb{C}
%EndExpansion
),$ where $Q$ be a Stonean compact.

\begin{definition}
Let $\mathfrak{A}$ be a $^{\ast }$-algebra over $%
%TCIMACRO{\U{2102} }%
%BeginExpansion
\mathbb{C}
%EndExpansion
.$ Let in addition $\mathfrak{A}$ be a module over $C_{\infty }(Q,%
%TCIMACRO{\U{2102} }%
%BeginExpansion
\mathbb{C}
%EndExpansion
),$ such that 
\begin{equation*}
(fu)^{\ast }=\overline{f}u^{\ast },
\end{equation*}%
\begin{equation*}
(fu)v=f(uv)=u(fv),
\end{equation*}%
for all $f\in C_{\infty }(Q,%
%TCIMACRO{\U{2102} }%
%BeginExpansion
\mathbb{C}
%EndExpansion
),$ $u,v\in \mathfrak{A}.$ Let $\mathfrak{A}$ be endowed with a $C_{\infty
}(Q,%
%TCIMACRO{\U{2102} }%
%BeginExpansion
\mathbb{C}
%EndExpansion
)$-valued norm $\left\Vert .\right\Vert ,$ which turns $\mathfrak{A}$ into a
Banach-Kantorovich space, such that 
\begin{equation*}
\left\Vert fu\right\Vert =\left\vert f\right\vert \left\Vert u\right\Vert ,
\end{equation*}%
for all $f\in C_{\infty }(Q,%
%TCIMACRO{\U{2102} }%
%BeginExpansion
\mathbb{C}
%EndExpansion
),u\in \mathfrak{A}.$ $\mathfrak{A}$ is called a $C^{\ast }$\textbf{-algebra
over }$C_{\infty }(Q,%
%TCIMACRO{\U{2102} }%
%BeginExpansion
\mathbb{C}
%EndExpansion
),$ where $Q$ be a Stonean compact, if the following conditions are
satisfied for all $u,v\in \mathfrak{A}:$

$1).$ 
\begin{equation*}
\left\Vert u\cdot v\right\Vert \leq \left\Vert u\right\Vert \left\Vert
v\right\Vert ;
\end{equation*}

$2).$ 
\begin{equation*}
\left\Vert u^{\ast }\right\Vert =\left\Vert u\right\Vert ;
\end{equation*}

$3).$ 
\begin{equation*}
\left\Vert u^{\ast }\cdot u\right\Vert =\left\Vert u\right\Vert ^{2}.
\end{equation*}
\end{definition}

\begin{example}
Let $\mathfrak{A}$ be an algebra of all bounded $C_{\infty }(Q,%
%TCIMACRO{\U{2102} }%
%BeginExpansion
\mathbb{C}
%EndExpansion
)$-linear operators defined on $C_{\infty }(Q,%
%TCIMACRO{\U{2102} }%
%BeginExpansion
\mathbb{C}
%EndExpansion
)$-Hilbert spaces. Then $\mathfrak{A}$ is an example of a $C^{\ast }$%
-algebra over\textbf{\ }$C_{\infty }(Q,%
%TCIMACRO{\U{2102} }%
%BeginExpansion
\mathbb{C}
%EndExpansion
),$ where $Q$ be a Stonean compact.
\end{example}

\begin{example}
Let $\mathfrak{A}$ be an algebra of all bounded $C_{\infty }(Q,%
%TCIMACRO{\U{2102} }%
%BeginExpansion
\mathbb{C}
%EndExpansion
)$-linear operators defined on $C_{\infty }(Q,%
%TCIMACRO{\U{2102} }%
%BeginExpansion
\mathbb{C}
%EndExpansion
)$-Hilbert spaces, and $\mathfrak{B}$ be its $^{\ast }$-subalgebra closed in 
$C_{\infty }(Q,%
%TCIMACRO{\U{2102} }%
%BeginExpansion
\mathbb{C}
%EndExpansion
)$-valued norm. Then $\mathfrak{B}$ is an example of a $C^{\ast }$-algebra
over\textbf{\ }$C_{\infty }(Q,%
%TCIMACRO{\U{2102} }%
%BeginExpansion
\mathbb{C}
%EndExpansion
),$ where $Q$ be a Stonean compact.
\end{example}

Let $\mathfrak{X}$ be a mapping 
\begin{equation*}
\mathfrak{X}:q\in Q\longmapsto \mathfrak{X}(q),
\end{equation*}

where $\mathfrak{X}(q)$ be a $C^{\ast }$-algebra for all $q\in Q,\mathfrak{\ 
}$where $Q$ be a Stonean compact.

Let $\mathfrak{C}$ be a certain collection of almost global sections of $%
\mathfrak{X}$.

\begin{definition}
A pair $(\mathfrak{X},\mathfrak{C})$ is called \textbf{a} \textbf{continuous
fiber bundle of }$C^{\ast }$\textbf{-algebras over Stonean compact }$Q$, if
the following conditions are satisfied:

$1).$ $\mathfrak{X}(q)$ is a $C^{\ast }$-algebra for all $q\in Q;$

$2).$ the pair $(\mathfrak{X},\mathfrak{C})$ is continuous Banach fiber
bundle over $Q;$

$3).$ if 
\begin{equation*}
u,v\in \mathfrak{C},
\end{equation*}%
then 
\begin{equation*}
u\cdot v\in \mathfrak{C},
\end{equation*}%
where 
\begin{equation*}
u\cdot v:q\in dom(u)\cap dom(v)\mapsto u(q)\cdot v(q);
\end{equation*}

$4).$ if 
\begin{equation*}
u\in \mathfrak{C},
\end{equation*}%
then 
\begin{equation*}
u^{\ast }\in \mathfrak{C},
\end{equation*}%
where 
\begin{equation*}
u^{\ast }:q\in dom(u)\mapsto u(q)^{\ast }.
\end{equation*}
\end{definition}

The following Theorem is the first main result of the present paper.

\begin{theorem}
Let $\mathfrak{X}$ be a continuous fiber bundle of $C^{\ast }$-algebras over 
$Q$, where $Q$ be a Stonean compact. Then $C_{\infty }(Q,\mathfrak{X})$ is a 
$C^{\ast }$-algebra over $C_{\infty }(Q,%
%TCIMACRO{\U{2102} }%
%BeginExpansion
\mathbb{C}
%EndExpansion
).$
\end{theorem}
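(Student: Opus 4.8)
The plan is to verify, one axiom at a time, that $C_\infty(Q,\mathfrak{X})$ equipped with the pointwise operations inherited from the fibers and the $C_\infty(Q,\mathbb{C})$-valued norm $p(\widehat{u})$ satisfies Definition 1. We already know from the preliminaries (citing \cite{Kusraev2000}, \cite{Gutman1993}) that $C_\infty(Q,\mathfrak{X})$ is a Banach--Kantorovich space over $C_\infty(Q,\mathbb{R})$, so the module structure over $C_\infty(Q,\mathbb{C})$ and the norm homogeneity $\|fu\| = |f|\,\|u\|$ come essentially for free; the work is to check that multiplication and involution are well-defined on equivalence classes and satisfy the three $C^*$-type inequalities fiberwise.

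First I would address well-definedness. Given $\widehat{u},\widehat{v}\in C_\infty(Q,\mathfrak{X})$, represented by extended almost global sections $\widetilde{u},\widetilde{v}$, the product $q\mapsto \widetilde{u}(q)\widetilde{v}(q)$ is defined on $dom(\widetilde{u})\cap dom(\widetilde{v})$, which is again comeager (intersection of two comeager sets in a Baire space), and it is continuous because condition 3) of Definition of a continuous fiber bundle of $C^*$-algebras guarantees $\mathfrak{C}$ is closed under products, hence products of continuous sections are continuous by the criterion stated in the preliminaries. One checks that $u\sim u'$, $v\sim v'$ forces $uv\sim u'v'$, so the product descends to the factor-set; similarly $q\mapsto \widetilde{u}(q)^*$ is an almost global continuous section by condition 4), and $u\sim u'$ implies $u^*\sim u'^*$. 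Bilinearity, associativity, and the $*$-algebra identities $(uv)^* = v^*u^*$, $u^{**}=u$ all hold because they hold in each fiber $\mathfrak{X}(q)$ and pointwise operations preserve pointwise identities on the common comeager domain. The compatibility conditions $(fu)^* = \overline{f}u^*$ and $(fu)v = f(uv) = u(fv)$ likewise reduce to the fiberwise scalar action of $\mathbb{C}$ on the $C^*$-algebra $\mathfrak{X}(q)$ together with the definition of the $C_\infty(Q,\mathbb{C})$-module structure.

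Next I would establish the three norm conditions. For condition 1), on the common comeager domain we have $\|\widetilde{u}(q)\widetilde{v}(q)\|_q \le \|\widetilde{u}(q)\|_q\,\|\widetilde{v}(q)\|_q$ by the $C^*$-submultiplicativity in the fiber $\mathfrak{X}(q)$; since $p_{uv}$, $p_u$, $p_v$ are the unique elements of $C_\infty(Q,\mathbb{C})$ agreeing with these continuous functions on a comeager set, and the order on $C_\infty(Q,\mathbb{R})$ is the pointwise order, the inequality $p(\widehat{u}\widehat{v}) \le p(\widehat{u})p(\widehat{v})$ follows by density/continuity: two continuous extended-real functions that are ordered on a dense set are ordered everywhere. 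Conditions 2) and 3), $\|u^*\| = \|u\|$ and $\|u^*u\| = \|u\|^2$, follow the same way from the corresponding identities $\|\widetilde{u}(q)^*\|_q = \|\widetilde{u}(q)\|_q$ and $\|\widetilde{u}(q)^*\widetilde{u}(q)\|_q = \|\widetilde{u}(q)\|_q^2$ in each fiber, lifted to equalities in $C_\infty(Q,\mathbb{R})$ by the uniqueness of continuous extension from a comeager subset of the Stonean compact $Q$.

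The main obstacle I anticipate is the bookkeeping around extended sections and comeager domains: one must be careful that the product and adjoint of \emph{extended} almost global sections are again taken in the extended representative, that the relevant domains remain comeager under finite intersections, and that the passage from a fiberwise (in)equality holding on a comeager set to the corresponding (in)equality between elements of $C_\infty(Q,\mathbb{C})$ is legitimate --- this last point rests on $Q$ being Stonean (so continuous extensions from dense open $F_\sigma$-sets are unique) and on the order of $C_\infty(Q,\mathbb{R})$ being the pointwise order, both recalled in the preliminaries. Once these foundational points are in place, each axiom verification is a short fiberwise argument, and I would organize the proof as: (i) product and involution are well-defined continuous operations on $C_\infty(Q,\mathfrak{X})$; (ii) they make it a $*$-algebra and a $C_\infty(Q,\mathbb{C})$-module with the required compatibilities; (iii) the three $C^*$-norm conditions hold by fiberwise reduction.
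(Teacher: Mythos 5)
Your proposal is correct and follows essentially the same route as the paper: cite Gutman's result that $C_{\infty }(Q,\mathfrak{X})$ is a Banach--Kantorovich space, observe that the $^{\ast }$-algebra and module structures are inherited pointwise from the fibers, and verify the three $C^{\ast }$-norm conditions by fiberwise reduction. The only difference is that you spell out the well-definedness of the operations on equivalence classes and the passage from inequalities on comeager sets to inequalities in $C_{\infty }(Q,\mathbb{R})$, details the paper's proof leaves implicit.
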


\begin{proof}
From \cite{Gutman1993} it follows that $C_{\infty }(Q,\mathfrak{X})$ is a
Banach-Kantorovich space.

Because $\mathfrak{X}(q)$ is a $^{\ast }$-algebra for all $q\in Q,$ one can
see that $C_{\infty }(Q,\mathfrak{X})$ is a $^{\ast }$-algebra.

From the fact that for all $q\in Q,$ $\mathfrak{X}(q)$ is a $C^{\ast }$%
-algebra it follows that 
\begin{equation*}
\left\Vert \widehat{u}\cdot \widehat{v}\right\Vert =\left\Vert \widehat{%
u(q)\cdot v(q)}\right\Vert _{q}\leq \left\Vert \widehat{u(q)}\right\Vert
_{q}\cdot \left\Vert \widehat{v(q)}\right\Vert _{q}=\left\Vert \widehat{u}%
\right\Vert \cdot \left\Vert \widehat{v}\right\Vert ,
\end{equation*}%
\begin{equation*}
\left\Vert \widehat{u}^{\ast }\right\Vert =\left\Vert \widehat{u(q)^{\ast }}%
\right\Vert _{q}=\left\Vert \widehat{u(q)}\right\Vert _{q}=\left\Vert 
\widehat{u}\right\Vert ,
\end{equation*}%
and 
\begin{eqnarray*}
\left\Vert \widehat{u}^{\ast }\cdot \widehat{u}\right\Vert &=&\left\Vert 
\widehat{u(q)^{\ast }\cdot u(q)}\right\Vert _{q}=\left\Vert \widehat{%
u(q)^{\ast }}\cdot \widehat{u(q)}\right\Vert _{q}= \\
&=&\left\Vert \widehat{u(q)}^{2}\right\Vert _{q}=\left\Vert \widehat{u(q)}%
\right\Vert _{q}^{2}=\left\Vert \widehat{u}\right\Vert ^{2}.
\end{eqnarray*}%
Thus, $C_{\infty }(Q,\mathfrak{X})$ is a $C^{\ast }$-algebra over $C_{\infty
}(Q,%
%TCIMACRO{\U{2102} }%
%BeginExpansion
\mathbb{C}
%EndExpansion
),$ where $Q$ be a Stonean compact.
\end{proof}

\section{Representations of $C^{\ast }$-algebras over\textbf{\ }$C_{\infty
}(Q,%
%TCIMACRO{\U{2102} }%
%BeginExpansion
\mathbb{C}
%EndExpansion
)$, where $Q$ is a Stonean compact, as Complete Continuous Fiber Bundles of $%
C^{\ast }$-algebras over $Q$}

Let now $\mathfrak{X}$ and $\mathfrak{Y}$ be continuous fiber bundles of $%
C^{\ast }$-algebras over the same Stonean compact $Q$. A mapping 
\begin{equation*}
H:q\in Q\mapsto H(q),
\end{equation*}%
$q\in Q,$ where 
\begin{equation*}
H(q):\mathfrak{X}(q)\rightarrow \mathfrak{Y}(q),
\end{equation*}%
is an injective $^{\ast }$-homomorphism of $C^{\ast }$-algebras, is called \
a $Q$\textbf{-}$C^{\ast }$\textbf{-embedding} of $\mathfrak{X}$ in $%
\mathfrak{Y}$, if 
\begin{equation*}
\{Hu:u\in M(Q,\mathfrak{X})\}\subset M(Q,\mathfrak{Y}).
\end{equation*}%
In the case when 
\begin{equation*}
\{Hu:u\in M(Q,\mathfrak{X})\}=M(Q,\mathfrak{Y}),
\end{equation*}%
the $Q$-$C^{\ast }$-imbedding $H$ is called a $Q$\textbf{-}$C^{\ast }$%
\textbf{-isomorphism,} and in this case the fiber bundles of $C^{\ast }$%
-algebras $\mathfrak{X}$ and $\mathfrak{Y}$ are called $Q$\textbf{-}$C^{\ast
}$\textbf{-isomorphic}.

The following Theorem is the second main result of the current paper.

\begin{theorem}
Let $\mathfrak{A}$ be a $C^{\ast }$-algebras over $C_{\infty }(Q,%
%TCIMACRO{\U{2102} }%
%BeginExpansion
\mathbb{C}
%EndExpansion
),$ where $Q$ is a Stonean compact. Then there exists a unique (up to a $Q$-$%
C^{\ast }$-isomorphism) complete continuous fiber bundle $\mathfrak{X}$ of $%
C^{\ast }$-algebras over $Q$ such that $\mathfrak{A}$ is $Q$-$C^{\ast }$%
-isomorphic to $C_{\infty }(Q,\mathfrak{X}).$
\end{theorem}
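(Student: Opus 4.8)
The plan is to build the fiber bundle $\mathfrak{X}$ directly from the algebraic and metric data of $\mathfrak{A}$, invoking the already-known representation of Banach--Kantorovich spaces over $C_\infty(Q,\mathbb{C})$ as complete continuous Banach fiber bundles, and then transporting the $^{\ast}$-algebra structure fiberwise. First I would forget the multiplication and involution and regard $\mathfrak{A}$ merely as a Banach--Kantorovich space over $C_\infty(Q,\mathbb{C})$. By the theory of Gutman (cited in the excerpt, \cite{Gutman1993}), there exists a complete continuous Banach fiber bundle $\mathfrak{X}$ over $Q$, unique up to $Q$-isomorphism, together with a lattice-isometric $C_\infty(Q,\mathbb{C})$-linear isomorphism $\Phi\colon \mathfrak{A}\to C_\infty(Q,\mathfrak{X})$; moreover the continuous structure $\mathfrak{C}$ may be taken to be $\{\widetilde{\Phi(u)}:u\in\mathfrak{A},\ \|u\|\in C(Q,\mathbb{R})\}$, and each fiber $\mathfrak{X}(q)$ is a Banach space, realized as a completion of a quotient of such sections evaluated at $q$.

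Next I would push the product and involution of $\mathfrak{A}$ through $\Phi$ onto $C_\infty(Q,\mathfrak{X})$ and then localize. Concretely, for $u,v\in\mathfrak{A}$ with continuous norms, the identities $\|uv\|\le\|u\|\|v\|$, $\|u^\ast\|=\|u\|$ and the module compatibility $\|fu\|=|f|\,\|u\|$ hold as relations in $C_\infty(Q,\mathbb{R})$, hence pointwise on the comeager common domain. Evaluating at a fixed $q\in Q$ gives well-defined bilinear and conjugate-linear maps on the dense subset $\{s(q):s\in\mathfrak{C}\}\subset\mathfrak{X}(q)$ satisfying $\|x\cdot y\|_q\le\|x\|_q\|y\|_q$ and $\|x^\ast\|_q=\|x\|_q$, so they extend by continuity to all of $\mathfrak{X}(q)$, turning each $\mathfrak{X}(q)$ into a Banach $^{\ast}$-algebra. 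To see the $C^\ast$-identity on the fiber, I would use condition $3)$ of the Definition: $\|u^\ast u\|=\|u\|^2$ in $C_\infty(Q,\mathbb{R})$ yields $\|x^\ast x\|_q=\|x\|_q^2$ on the dense set, hence on all of $\mathfrak{X}(q)$ by continuity of the $^{\ast}$-algebra operations, so $\mathfrak{X}(q)$ is a genuine $C^\ast$-algebra. Conditions $3)$ and $4)$ of the Definition of a continuous fiber bundle of $C^\ast$-algebras then hold because $\mathfrak{C}$ is closed under the transported product and involution by construction, and one checks that $q\mapsto\|u(q)\cdot v(q)\|_q$ and $q\mapsto\|u(q)^\ast\|_q$ are continuous for $u,v\in\mathfrak{C}$, which follows from the continuity of $\|uv\|$ and $\|u^\ast\|$ as elements of $C(Q,\mathbb{R})$.

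Having produced $(\mathfrak{X},\mathfrak{C})$, completeness of the underlying Banach fiber bundle is inherited from Gutman's construction, and by the first main Theorem of the paper $C_\infty(Q,\mathfrak{X})$ is a $C^\ast$-algebra over $C_\infty(Q,\mathbb{C})$; the map $\Phi$ is then not only a Banach--Kantorovich isomorphism but, by the way the operations were defined on the fibers, multiplicative and $^{\ast}$-preserving, i.e.\ a $Q$-$C^\ast$-isomorphism $\mathfrak{A}\cong C_\infty(Q,\mathfrak{X})$. For uniqueness, suppose $\mathfrak{Y}$ is another complete continuous fiber bundle of $C^\ast$-algebras with $\mathfrak{A}\cong C_\infty(Q,\mathfrak{Y})$. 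Composing gives a $C_\infty(Q,\mathbb{C})$-linear isometric $^{\ast}$-algebra isomorphism $C_\infty(Q,\mathfrak{X})\cong C_\infty(Q,\mathfrak{Y})$; by the uniqueness clause of Gutman's theorem it is induced by a $Q$-isomorphism of Banach fiber bundles $H\colon q\mapsto H(q)$, and one then checks fiberwise — again by evaluating the multiplicativity and $^{\ast}$-preservation at each $q$ on the dense sets coming from the continuous structures — that each $H(q)$ is a $^{\ast}$-isomorphism of $C^\ast$-algebras, so $H$ is a $Q$-$C^\ast$-isomorphism.

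The main obstacle I anticipate is the fiberwise well-definedness and continuity of the transported operations: one must verify that if two sections $s_1,s_2\in\mathfrak{C}$ agree at $q$ then so do $s_1\cdot t$ and $s_2\cdot t$ at $q$ — which rests on the submultiplicativity estimate localized at $q$ forcing the product to factor through the quotient defining $\mathfrak{X}(q)$ — and that the resulting operations are jointly continuous on $\mathfrak{X}$ in the bundle topology, not merely on each fiber. The rest is a matter of bookkeeping: transporting identities that hold in $C_\infty(Q,\mathbb{R})$ to pointwise identities on a comeager set and then to every $q\in Q$ by density and continuity.
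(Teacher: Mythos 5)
Your proposal is correct, and at bottom it runs on the same fuel as the paper's proof: the dense part $\mathfrak{D}=\{u\in \mathfrak{A}:\left\Vert u\right\Vert \in C(Q,\mathbb{R})\}$, the pointwise seminorms $\rho _{q}(u)=p_{\left\Vert u\right\Vert }(q)$, the transfer of the submultiplicativity, involution and $C^{\ast }$-identities to each fiber, Gutman's machinery \cite{Gutman1993} for extension and completeness, and the criterion that complete bundles are $Q$-isomorphic iff their section spaces are. The genuine difference is one of direction. The paper works bottom-up: it first constructs each fiber by hand as the completion of the quotient $\mathfrak{D}_{q}=\mathfrak{D}\diagup J_{q}^{0}$, verifies the pre-$C^{\ast }$-axioms there via $\pi _{q}$, assembles the bundle $(\mathfrak{X},\mathfrak{C})$ from the sections $\widetilde{u}(q)=\varphi _{q}(u)$, and only then invokes Gutman to extend the densely defined isometry $\Phi _{0}$ to $\Phi :\mathfrak{A}\rightarrow C_{\infty }(Q,\mathfrak{X})$. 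You work top-down: you apply Gutman's representation theorem to $\mathfrak{A}$ as a bare Banach--Kantorovich space to get the complete bundle and the isomorphism $\Phi$ at once, and then transplant multiplication and involution onto the fibers by a density argument, with well-definedness secured by exactly the localized estimate ($\left\Vert s_{1}-s_{2}\right\Vert _{q}=0$ forces $\left\Vert (s_{1}-s_{2})\cdot t\right\Vert _{q}=0$) that, in the paper, is what makes the quotient $\mathfrak{D}_{q}$ an algebra in the first place. Your route buys economy (no explicit quotient-completion bookkeeping, the fiberwise $C^{\ast }$-structure appears as decoration on an already-known bundle), while the paper's route makes the fibers concrete and keeps every verification elementary. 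One point where you are actually more complete than the paper: in the uniqueness step the paper only cites the Banach-bundle criterion that $Q$-isomorphism of $C_{\infty }(Q,\mathfrak{X})$ and $C_{\infty }(Q,\mathfrak{Y})$ yields a $Q$-isomorphism of bundles, leaving implicit that the induced fiber maps respect the algebra structure; your fiberwise check that each $H(q)$ is a $^{\ast }$-isomorphism (again by density) closes that small gap. A minor remark: your worry about joint continuity of the operations on the total space is unnecessary, since the paper's definition of a continuous fiber bundle of $C^{\ast }$-algebras only requires the continuous structure $\mathfrak{C}$ to be closed under the pointwise product and involution, which you already have.
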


\begin{proof}
Let us set 
\begin{equation*}
\mathfrak{D}=\{u\in \mathfrak{A}:\left\Vert u\right\Vert \in C(Q,%
%TCIMACRO{\U{211d} }%
%BeginExpansion
\mathbb{R}
%EndExpansion
)\}.
\end{equation*}%
One can see that $\mathfrak{D}$ is a $C_{\infty }(Q,\mathfrak{X})$-module,
which is $bo$-dense in $\mathfrak{A}$. In addition, it is easy to see that $%
\mathfrak{B}$ is a $^{\ast }$-algebra, and 
\begin{equation*}
\left\Vert u^{\ast }u\right\Vert =\left\Vert u\right\Vert ^{2},
\end{equation*}%
for all $u\in \mathfrak{D}.$

Let us define a $C_{\infty }(Q,%
%TCIMACRO{\U{211d} }%
%BeginExpansion
\mathbb{R}
%EndExpansion
)$-valued seminorm $\rho _{q}$ on $\mathfrak{D}$ as 
\begin{equation*}
\rho _{q}(u)=p_{\left\Vert u\right\Vert }(q),
\end{equation*}%
for all $q\in Q.$

Let 
\begin{equation*}
J_{q}^{0}=\{u\in \mathfrak{D}:\rho _{q}(u)=0\}.
\end{equation*}%
Consider the factor-algebras 
\begin{equation*}
\mathfrak{D}_{q}=\mathfrak{D}\diagup J_{q}^{0},
\end{equation*}%
and let $\left\Vert .\right\Vert _{q}$ be the norm on $\mathfrak{D}_{q}$
generated by the seminorm $\rho _{q}.$

Let 
\begin{equation*}
\pi _{q}:\mathfrak{D}\rightarrow \mathfrak{D}_{q},
\end{equation*}%
$q\in Q,$ be a projection from $\mathfrak{D}$ to $\mathfrak{D}_{q}.$ Then 
\begin{equation*}
\pi _{q}(u\cdot v)=\pi _{q}(u)\cdot \pi _{q}(v),
\end{equation*}%
and 
\begin{equation*}
\pi _{q}(u^{\ast })=\pi _{q}(u)^{\ast }.
\end{equation*}

Because 
\begin{equation*}
\left\Vert \pi _{q}(u)\right\Vert _{q}=\rho _{q}(u),
\end{equation*}%
for all $u\in \mathfrak{D},$ and $q\in Q,$ it follows that 
\begin{equation*}
\left\Vert \pi _{q}(u)\cdot \pi _{q}(v)\right\Vert _{q}=\left\Vert \pi
_{q}(u\cdot v)\right\Vert _{q}=\rho _{q}(u\cdot v)=
\end{equation*}%
\begin{equation*}
=p_{\left\Vert u\cdot v\right\Vert }(q)\leq p_{\left\Vert u\right\Vert \cdot
\left\Vert v\right\Vert }(q)=p_{\left\Vert u\right\Vert }(q)\cdot
p_{\left\Vert v\right\Vert }(q)=
\end{equation*}%
\begin{equation*}
=\rho _{q}(u)\cdot \rho _{q}(v)=\left\Vert \pi _{q}(u)\right\Vert _{q}\cdot
\left\Vert \pi _{q}(v)\right\Vert _{q}.
\end{equation*}

Similarly, 
\begin{equation*}
\left\Vert \pi _{q}(u)^{\ast }\right\Vert _{q}=\left\Vert \pi _{q}(u^{\ast
})\right\Vert _{q}=\rho _{q}(u^{\ast })=
\end{equation*}%
\begin{equation*}
=p_{\left\Vert u^{\ast }\right\Vert }(q)=p_{\left\Vert u\right\Vert
}(q)=\rho _{q}(u)=\left\Vert \pi _{q}(u)\right\Vert _{q},
\end{equation*}%
for all $u\in \mathfrak{D},$ and $q\in Q.$

In the similar manner we get that 
\begin{equation*}
\left\Vert \pi _{q}(u)\cdot \pi _{q}(u)^{\ast }\right\Vert _{q}=\left\Vert
\pi _{q}(u\cdot u^{\ast })\right\Vert _{q}=\rho _{q}(u\cdot u^{\ast
})=p_{\left\Vert u\cdot u^{\ast }\right\Vert }(q)=
\end{equation*}%
\begin{equation*}
=p_{\left\Vert u\right\Vert ^{2}}(q)=p_{\left\Vert u\right\Vert
}(q)^{2}=\rho _{q}(u)^{2}=\left\Vert \pi _{q}(u)\right\Vert _{q}^{2},
\end{equation*}%
for all $u\in \mathfrak{D},$ and $q\in Q.$

So, one can see that $(\mathfrak{D}_{q},\left\Vert .\right\Vert _{q})$ is a
pre-$C^{\ast }$-algebra. Let us denote a completion of $\mathfrak{D}_{q}$ in
the norm $\left\Vert .\right\Vert _{q}$ by $\mathfrak{X}_{q}$. From \cite%
{Dixmier1977} it follows that $\mathfrak{X}_{q}$ is a $C^{\ast }$-algebra
for each $q\in Q.$

Let 
\begin{equation*}
i_{q}:\mathfrak{D}_{q}\rightarrow \mathfrak{X}_{q},
\end{equation*}%
be the canonical embedding of the pre-$C^{\ast }$-algebra $\mathfrak{D}_{q}$
into the $C^{\ast }$-algebra $\mathfrak{X}_{q},$ for each $q\in Q.$ From 
\cite{Dixmier1977} it follows that 
\begin{equation*}
i_{q}(x\cdot y)=i_{q}(x)\cdot i_{q}(y),
\end{equation*}%
and 
\begin{equation*}
i_{q}(x^{\ast })=i_{q}(x)^{\ast },
\end{equation*}%
for each $x,y\in \mathfrak{D}_{q},$ and $q\in Q.$ Thus, the mapping 
\begin{equation*}
\varphi _{q}=\pi _{q}\circ i_{q},
\end{equation*}%
for each $q\in Q$ is a $^{\ast }$-homomorphism from $\mathfrak{D}_{q}$ into $%
\mathfrak{X}_{q}.$

Let us define a mapping 
\begin{equation*}
\mathfrak{X}:q\longmapsto \mathfrak{X}_{q},
\end{equation*}%
for each $q\in Q.$

Let $\mathfrak{C}$ be the set of all sections $\widetilde{u}$, such that 
\begin{equation*}
\widetilde{u}(q)=\varphi _{q}(u),
\end{equation*}%
$u\in \mathfrak{D},$ $q\in Q.$ It is easy to see that $(\mathfrak{X},%
\mathfrak{C})$ is a continuous Banach fiber bundle, and from definition of
the set $\mathfrak{C}$ it follows that $(\mathfrak{X},\mathfrak{C})$ is a
continuous fiber bundle of $C^{\ast }$-algebras over $Q,$ where $Q$ is a
Stonean compact.

Let us now consider $C_{\infty }(Q,\mathfrak{X}).$ From Theorem 1 it follows
that $C_{\infty }(Q,\mathfrak{X})$ is a $C^{\ast }$-algebra over $C_{\infty
}(Q,%
%TCIMACRO{\U{2102} }%
%BeginExpansion
\mathbb{C}
%EndExpansion
),$ with a $C_{\infty }(Q,%
%TCIMACRO{\U{211d} }%
%BeginExpansion
\mathbb{R}
%EndExpansion
)$-valued norm $\left\Vert .\right\Vert _{C_{\infty }(Q,\mathfrak{X})}.$

Now we show that $\mathfrak{A}$ is $Q$-$C^{\ast }$-isomorphic to $C_{\infty
}(Q,\mathfrak{X}).$ Indeed, for each $u\in \mathfrak{D},$ we set 
\begin{equation*}
\Phi _{0}(u)=\widehat{\widetilde{u}}.
\end{equation*}%
One can easily see that $\Phi _{0}$ is an $Q$-isometry. In addition, $\Phi
_{0}$ satisfies the following identities: 
\begin{equation*}
\Phi _{0}(u\cdot v)=\widehat{\widetilde{u\cdot v}}=\widehat{\varphi
_{q}(u\cdot v)}=\widehat{\varphi _{q}(u)\cdot \varphi _{q}(v)}=
\end{equation*}%
\begin{equation*}
=\widehat{\varphi _{q}(u)}\cdot \widehat{\varphi _{q}(v)}=\widehat{%
\widetilde{u}}\cdot \widehat{\widetilde{v}}=\Phi _{0}(u)\cdot \Phi _{0}(v),
\end{equation*}%
and 
\begin{equation*}
\Phi _{0}(u^{\ast })=\widehat{\widetilde{u^{\ast }}}=\widehat{\varphi
_{q}(u^{\ast })}=
\end{equation*}%
\begin{equation*}
=\widehat{\varphi _{q}(u)}^{\ast }=(\widehat{\widetilde{u}})^{\ast }=\Phi
_{0}(u)^{\ast },
\end{equation*}%
for all $u,v\in \mathfrak{D,}$ and $q\in Q.$

From \cite{Gutman1993} it follows that $\Phi _{0}$ can be extended to an $%
C_{\infty }(Q,%
%TCIMACRO{\U{2102} }%
%BeginExpansion
\mathbb{C}
%EndExpansion
)$-module $Q$-isomeiric isomorphism 
\begin{equation*}
\Phi :\mathfrak{A}\rightarrow C_{\infty }(Q,\mathfrak{X}).
\end{equation*}%
One can as well see that $\Phi $ preserves the operations of multiplication
and involution, i.e. $\Phi $ is an $Q$-$C^{\ast }$-isomorphism from $%
\mathfrak{A}$ onto $C_{\infty }(Q,\mathfrak{X}).$

From \cite{Gutman1993} it follows that $\mathfrak{X}$ is a complete
continuous fiber bundle of $C^{\ast }$-algebras over $Q$, where $Q$ is a
Stonean compact.

To establish the uniqueness one must recall (see Preliminaries) that two
complete continuous Banach fiber bundles $\mathfrak{X}$ and $\mathfrak{Y}$
over the same $Q$, where $Q$ is a Stonean compact, are $Q$-isomorphic iff $%
C_{\infty }(Q,\mathfrak{X})$ and $C_{\infty }(Q,\mathfrak{Y})$ are $Q$%
-isomorphic.
\end{proof}

\end{document}